\numberwithin{equation}{section}
\font\script=rsfs10 at 11pt
\def\eps{\varepsilon}
\def\H{{\mbox{\script H}\,\,}}
\def\Hz{$(\hbox{\bf H})$}
\def\L{{\mbox{\script L}\,\,}}
\def\bar{{\rm bar}}
\def\R{\mathbb R}
\def\C{\mathcal C}
\def\D{\mathcal D}
\def\S{\mathbb S}
\def\N{\mathbb N}
\def\bal{\begin{aligned}}
\def\eal{\end{aligned}}
\def\proofof#1{\begin{proof}[Proof of #1]}
\def\Chi#1{\hbox{{\large $\chi$}{\Large $_{_{#1}}$}}}
\def\case#1#2{\par\noindent{\underline{\it Case~#1.}}\emph{ #2}\\}
\def\step#1#2{\par\noindent{\underline{\it Step~#1.}}\emph{ #2}\\}
\def\freccia#1{\xrightarrow[\ #1]{}}
\def\XXint#1#2#3{{\setbox0=\hbox{$#1{#2#3}{\int}$} \vcenter{\vspace{-1pt}\hbox{$#2#3$}}\kern-.5\wd0}}
\def\Xint#1{\mathchoice {\XXint\displaystyle\textstyle{#1}}{\XXint\textstyle\scriptstyle{#1}}{\XXint\scriptstyle\scriptscriptstyle{#1}}{\XXint\scriptscriptstyle\scriptscriptstyle{#1}}\!\int}
\def\intmed{\Xint{\hbox{---}}}
\def\comp{\subset\subset}
\newcommand{\res}{\mathop{\hbox{\vrule height 7pt width .5pt depth 0pt
\vrule height .5pt width 6pt depth 0pt}}\nolimits}
\newcommand{\TODO}[1]{{\textcolor{blue}{  \newline\textbf{TODO: #1}}}\newline}
\newcommand{\WARNING}[1]{{\textcolor{red}{  \newline\textbf{#1}}}\newline}
\newcounter{mt}
\def\maintheorem#1#2#3{\par \medskip \noindent {\bf Theorem~\mref{#1}}~(#2).~{\it #3}\par}
\def\mref#1{\Alph{#1}}
\def\maintheoremdeclaration#1{\stepcounter{mt}\newcounter{#1}\setcounter{#1}{\arabic{mt}}}
\newtheorem{theorem}{Theorem}[section]
\newtheorem{lemma}[theorem]{Lemma}
\newtheorem{prop}[theorem]{Proposition}
\newtheorem{defin}[theorem]{Definition}
\newtheorem{remark}[theorem]{Remark}
\begin{document}

\title{The sharp quantitative barycentric isoperimetric inequality for bounded sets}

\author{C. Gambicchia}
\author{A. Pratelli}

\begin{abstract}
We prove the sharp quantitative isoperimetric inequality in the case of the barycentric asymmetry, for bounded sets. This generalizes the $2$-D case recently proved in~\cite{BCH}.
\end{abstract}

\maketitle

\section{Introduction}

Quantitative isoperimetric inequalities have been attracting a huge interest in the last two decades. The basic question to be investigated is very simple. Namely, it is known that the set which minimizes the perimeter among those with fixed volume is the ball. But is true that a set which minimizes the perimeter up to a small error must be very close to a ball? Of course, one wants to give an affirmative answer, also providing a quantitative bound which correlates the perimeter gap of a set with its distance, in a suitable sense, to a ball. To make all this explicit, we begin by defining in the usual way the \emph{isoperimetric deficit} of a set $E\subseteq \R^N$, given by
\[
\delta(E) = \frac{P(E)-P(B(m))}{P(B(m))}\,.
\]
Here, by $B(m)$ we denote the ball centered at the origin and with mass $m=|E|$. We have to define now the \emph{asymmetry} of the set $E$, which is a measure of how much $E$ differs from being a ball. Notice that, while in the definition of the isoperimetric deficit we can use any ball of volume $m$, since they all have the same perimeter, in the definition of asymmetry one has to select a suitable ball, because of course we can guess that a set with a small deficit is very similar to some particular ball of volume $m$, but not necessarily to the one centered in the origin! There are several possible interesting definitions of asymmetry.\par

The one which has been more investigated is the so-called \emph{Fraenkel asymmetry}, defined by
\begin{equation}\label{deffras}
\lambda(E) = \inf \Bigg\{ \frac{\big|E\Delta (x+B(m))\big|}{|E|},\, x\in\R^N\Bigg\}\,,
\end{equation}
where we denote by ``$\Delta$'' the symmetric difference, that is, $A\Delta B= (A\setminus B) \cup (B\setminus A)$, and where the above infimum can actually be easily shown to be a minimum. With this notion of asymmetry, the sharp quantitative isoperimetric inequality reads as
\begin{equation}\label{ShQII}
\lambda(E) \leq C_F(N) \sqrt{\delta(E)}\,,
\end{equation}
where $C_F(N)$ is a geometric constant only depending on the dimension $N$, and the power $1/2$ is optimal. The above inequality has been proved with several different techniques starting from 2006 and it is now very well known, see for instance~\cite{FMP, FiMP, CL}.\par

However, this notion of asymmetry is not the only one which is meaningful, and there are at least two other ones which have been deeply investigated. A very natural one is the \emph{Hausdorff asymmetry}, given by
\[
\lambda_H(E) = \inf \Bigg\{ \frac{d_H\big(E,(x+B(m)\big)}{|E|},\, x\in\R^N\Bigg\}\,, 
\]
where $d_H$ is the Hausdorff distance. With this choice of asymmetry, the sharp inequality is
\begin{equation}\label{ShHaus}
\lambda_H(E) \leq C_H(N) \delta(E)^{p(N)}\,,
\end{equation}
which is valid for all \emph{convex sets} in $\R^N$ and where not only the constant $C_H(N)$ but also the power $p(N)$ depend on $N$. This inequality was proved in 1989 by Fuglede, who also found the exact formula for the sharp exponent $p(N)$, see~\cite{F89}. It is important to notice that, while the inequality~(\ref{ShQII}) is valid for all subsets of $\R^N$, the inequality~(\ref{ShHaus}) requires the set to be convex (or nearly spherical). This is not strange, since the inequality is easily seen to be false in general. It suffices to take a set $E$ given by a unit ball plus a very tiny ball at very large distance; of course, the isoperimetric deficit of $E$ can be made arbitrarily small as soon as the second ball is chosen small enough, while the Hausdorff asymmetry of $E$ is very close to the distance between the two balls, which can be made arbitrarily large. Speaking in general, it is more or less obvious that the Hausdorff distance is meaningful only when dealing with convex sets, or more generally sets which are known to have a special geometrical structure.\par

A last notion of asymmetry, which is the one we are interested in for this article, is the so-called \emph{barycentric asymmetry}, given by
\begin{equation}\label{defbaras}
\lambda_0(E) = \frac{\big|E\Delta (\bar(E)+B(m))\big|}{|E|}\,,
\end{equation}
where $\bar(E)$ is the barycenter of $E$ (which of course cannot be defined for any set, see Definition~\ref{defbar}). Before commenting on this asymmetry, we immediately point out that the corresponding sharp inequality, proved by Fuglede in 1993 (see~\cite{F93}) reads as
\begin{equation}\label{ShBar}
\lambda_0(E) \leq C_B(N) \sqrt{\delta(E)}\,,
\end{equation}
and is again valid for all \emph{convex sets} in $\R^N$. Observe that in this case, as in~(\ref{ShQII}) and unlike~(\ref{ShHaus}), the sharp exponent is again $2$.\par

Let us now discuss the barycentric asymmetry. First of all, we notice that it measures the distance between the set $E$ and a ball exactly as the Fraenkel asymmetry, that is, as the (rescaled) volume of the symmetric difference. The big difference is that, while with the Fraenkel asymmetry the ball is chosen so to minimise this volume, with the barycentric asymmetry the ball is simply the one centered in the barycenter. This is a strong and somehow ``arbitrary'' choice, but it is reasonable to guess that in most cases, if a set $E$ is very close to some ball, the center of this ball cannot be too far from the barycenter of $E$. Working with this asymmetry is then very handful, because the problem of ``choosing the correct ball'' is eliminated, and for instance this asymmetry is the easiest to use for a numerical approximation, since it is of course computationally much easier to calculate a barycenter and then a single volume of a symmetric difference, instead of calculating several such volumes in order to perform a minimization process. Also from a theoretical point of view, this asymmetry has been used intensively and it had a crucial role in the literature. Indeed, not only the asymmetry is simple to use as mentioned above because of the fact that the center of the ball should not be sought but is fixed by definition. But also, the Fuglede argument to prove~(\ref{ShBar}) can be easily repeated and generalised, since it basically consists in a clever way to reduce the proof to a boring but now standard calculation. For instance, in the celebrated paper~\cite{CL} in which the estimate~(\ref{ShQII}) for the Fraenkel asymmetry is shown, the authors are able to reduce themselves to a particular situation of ``nearly spherical sets'', and for these sets they argue as in Fuglede's proof of~(\ref{ShBar}), since for them the obvious approximation of the Frankel asymmetry from above with the barycentric asymmetry is not too bad and is strong enough to get the sharp estimate~(\ref{ShQII}).\par

A key observation is now to be done. As with the Hausdorff asymmetry, also with the barycentric one the inequality~(\ref{ShBar}) is surely not valid for all sets. The very same counterexample can be used. Indeed, putting together a unit ball with a second ball with radius $r\ll 1$ and distance $d\gg 1$ is again a set with a very small isoperimetric deficit. However, if $\omega_N d r^N >  3$, then the distance between the barycenter $\bar (E)$ and the center of the unit ball is bigger than $3$, and then the barycentric ball $\bar (E) + B(m)$ has no intersection with $E$, so that the barycentric asymmetry is $\lambda_0(E) = 2$, against the validity of~(\ref{ShBar}). However, while for the Hausdorff asymmetry to be meaningful it is pretty clear that convexity (or something close to it) is needed, the situation is less clear for the barycentric case. Indeed, the counterexample enlightens the fact that taking the barycenter in the definition of the asymmetry is not a meaningful choice for a fully general set. And on the other hand, Fuglede's proof shows that, instead, this is a smart choice for a convex set. However, from a geometrical point of view, one can imagine that the choice can be smart even for a case much more general than simply the convex sets. In other words, it is reasonable to hope that there is room for improvement.

This crucial observation has been the starting point of the very recent paper~\cite{BCH}. There, the authors notice that the ``obvious counterexample'', which is the one we already described above with the two balls, is not only not convex, but also not connected. This can be easily removed if the dimension is $N\geq 3$, because connecting the two balls with an incredibly thin cylinder with length $d$ and diameter much smaller than the radius $r$ of the small ball makes the set connected and has a negligible effect on the perimeter of $E$ and on its barycenter, so again $\lambda_0(E)\approx 2$ while $\delta(E) \approx 0$. But in dimension $N=2$ this is no more true, since a ``thin cylinder'' with length $d$ and incredibly small radius is actually a rectangle and it gives however a contribution at least $2d\gg 1$ to the perimeter, so in this case $\delta(E) \gg 1$. Therefore, it makes sense to ask oneself whether the barycentric inequality~(\ref{ShBar}) is valid for connected sets in $\R^2$, and the answer is actually positive.
\begin{theorem}[Bianchini--Croce--Henrot, 2023 (\cite{BCH})]\label{ThBCH}
There exists a constant $C_2$ such that, for every connected set $E\subseteq \R^2$, one has
\[
\lambda_0(E) \leq C_{BCH} \sqrt{\delta(E)}\,.
\]
\end{theorem}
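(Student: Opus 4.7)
The plan is to reduce the claim to the sharp Fraenkel inequality~\eqref{ShQII} and then control the difference between the optimal Fraenkel ball and the barycentric ball, using a geometric feature specific to the plane. By~\eqref{ShQII} there exists $x^* \in \R^2$ such that $B^* := x^* + B(m)$ satisfies $|E \Delta B^*| \leq C_F |E| \sqrt{\delta(E)}$. Denoting by $r$ the radius of $B(m)$, the symmetric difference of two discs of radius $r$ whose centres lie at distance $d \leq 2r$ has area at most $4rd$ (an explicit computation). Hence, once one establishes $|\bar(E) - x^*| \leq C r \sqrt{\delta(E)}$, the conclusion follows via
\[
\lambda_0(E) \leq \lambda(E) + \frac{|B^* \Delta (\bar(E) + B(m))|}{|E|} \leq C_F \sqrt{\delta(E)} + \frac{4 C r^2}{\pi r^2} \sqrt{\delta(E)} = C_{BCH}\sqrt{\delta(E)}.
\]

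To bound $|\bar(E) - x^*|$, I would start from the identity
\[
|E|\,(\bar(E) - x^*) = \int_{E \setminus B^*}(y - x^*)\,dy - \int_{B^* \setminus E}(y - x^*)\,dy,
\]
valid because the centre of mass of $B^*$ is $x^*$. The second integral is immediate to bound: $|y - x^*| \leq r$ on $B^*$, whence it is at most $r\,|B^* \setminus E| \leq (r/2)\,|E \Delta B^*| \leq (C_F/2)\,r|E| \sqrt{\delta(E)}$. The first integral is the delicate one, since a priori nothing prevents $E \setminus B^*$ from reaching very far from $x^*$. This is precisely where the planar connectedness enters: I would invoke the classical inequality $\operatorname{diam}(E) \leq P(E)/2$ for connected planar sets, which reflects the fact that the boundary of such a set contains two arcs joining any two given points, each of length at least their mutual distance. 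Since $P(E) = (1+\delta(E))\,2\pi r$ and $E \cap B^* \neq \emptyset$ as soon as $\delta(E)$ is small (indeed $|E \cap B^*| \geq |E|(1 - \lambda(E)/2)$), this forces $|y - x^*| \leq C r$ for every $y \in E$ and, consequently,
\[
\int_{E \setminus B^*} |y - x^*|\,dy \leq C r\,|E \setminus B^*| \leq C r |E| \sqrt{\delta(E)}.
\]
Combining with the trivial bound for the second integral yields $|\bar(E) - x^*| \leq C r \sqrt{\delta(E)}$, as required.

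The main obstacle is providing a rigorous version of $\operatorname{diam}(E) \leq P(E)/2$ in the measure-theoretic setting of sets of finite perimeter, where one must pass to a suitable representative of $E$ or rely on an indecomposability notion to make sense of ``connected''. This is precisely the planar fact that breaks down in dimension $N \geq 3$, where a very thin cylindrical tube connecting two distant balls has almost negligible perimeter cost; that is why Theorem~\ref{ThBCH} is confined to the plane, and why the higher-dimensional generalisation obtained in the present paper must adopt a different hypothesis such as boundedness. A minor technical point is that the argument is only informative when $\delta(E)$ is small enough that $E \cap B^*$ is non-empty; the regime of large $\delta(E)$ is handled trivially by $\lambda_0(E) \leq 2$.
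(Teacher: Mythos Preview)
Your argument is correct. Both your proof and the paper's derivation rest on the same planar input, namely Lemma~\ref{connbdd} ($P(E)\geq 2\,\mathrm{diam}(E)$ for connected $E\subseteq\R^2$), and both dispose of the large-deficit regime by the trivial bound $\lambda_0\leq 2$. The difference lies in what happens once the diameter is controlled. The paper, in Section~\ref{Sect:comparison}, uses the diameter bound simply to place $E$ inside the hypotheses of Theorem~\mref{main} with $D=2\sqrt\pi$, and then invokes the full symmetrization machinery of Proposition~\ref{allhere} to conclude. You instead feed the diameter bound directly into the barycenter identity
\[
|E|\,(\bar(E)-x^*) = \int_{E\setminus B^*}(y-x^*)\,dy - \int_{B^*\setminus E}(y-x^*)\,dy
\]
to obtain $|\bar(E)-x^*|\leq C r\sqrt{\delta(E)}$, and then compare the two discs by an elementary area estimate. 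This is shorter and entirely self-contained for the planar connected case; the paper's route, by contrast, has the virtue of exhibiting Theorem~\ref{ThBCH} as a corollary of the general bounded-set inequality in any dimension. Your concern about the measure-theoretic version of the diameter--perimeter inequality is exactly the content of Lemma~\ref{connbdd}, which the paper records as a standard fact for the indecomposability notion of Definition~\ref{defconn}.
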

This inspiring result was our starting point. Indeed, while as observed above the connectedness can be useful only in dimension $N=2$, we started wondering whether there is some other geometrical property, weaker than the convexity, which can make the barycentric asymmetry meaningful, or in other words, under which the inequality~(\ref{ShBar}) is still valid. We ended up noticing that such a simple geometrical property for any dimension $N$ exists, and it is the boundedness, which is of course extremely weaker than the convexity. More precisely, our main result is the following.
\maintheorem{main}{Quantitative barycentric isoperimetric inequality for bounded sets}{For every $N\geq 2$ and every $D>0$ there exists a constant $C(N,D)$ such that, for any set $E\subseteq\R^N$ with diameter less than $D |E|^{1/N}$, the inequality
\begin{equation}\label{ourineq}
\lambda_0(E) \leq C(N,D) \sqrt{\delta(E)}
\end{equation}
holds true.}
A couple of quick remarks have to be done. First of all, the fact that the diameter of $E$ must be bounded by $D |E|^{1/N}$, and not simply by $D$, is obvious; indeed, since we do not fix the volume of $E$, the diameter must be considered with respect to the ``expected diameter'' of the set, which is of order $|E|^{1/N}$. By trivial rescaling, the theorem is equivalent to say that the inequality~(\ref{ourineq}) is true for all sets $E$ of unit volume and diameter less than $D$. Second, the fact that the constant in~(\ref{ourineq}) depends on both $N$ and $D$ and not only on $N$ is again obvious. In fact, the information that $E$ is bounded is of no use without an estimate on the diameter, and of course the constant $C(N,D)$ must explode as $D\to +\infty$. A quick observation about the dependance of $C(N,D)$ on $D$ is contained in Remark~\ref{dependance}. Finally, it is interesting to compare Theorem~\mref{main} with the two preceding results mentioned above. We do this in the brief final Section~\ref{Sect:comparison}, where we observe that both these results readily follow from our one, and we comment on this.\par\bigskip

The plan of the paper is very simple. In Section~\ref{secprel} we collect the notation that we are going to use, and the few definitions and known results that will be needed later. Then, in Section~\ref{secproof} we present the proof of the main result. And finally, in Section~\ref{Sect:comparison}, we make a quick comparison between our result and the other ones on the same question.

We now conclude this introduction with a quick ``techcnical'' remark.
\begin{remark}
One can observe that, in the paper~\cite{BCH}, the authors where considering the Minkowski perimeter and the topological definition of connectedness, while one might prefer to use the standard definition of perimeter, and consequently using the measure theoretic definition of connectedness (see Definition~\ref{defconn}). However, this makes no practical difference at all; indeed, one can simply prove the sharp inequality just for smooth sets, for which the two definitions clearly coincide, and then argue by density.
\end{remark}

\subsection{\label{secprel}Preliminary definitions and results}

Through the paper, for any set $E\subseteq \R^N$ of locally finite perimeter, we will denote by $\partial^* E$ the reduced boundary of $E$, and by $P(E)=\H^{N-1}(\partial^* E)$ its perimeter (for these basic definitions one can see for instance~\cite{AFP}). The usual definition of \emph{barycenter} of a set $E$ is the following.
\begin{defin}\label{defbar}
Let $E\subseteq\R^N$ be a measurable set with strictly positive volume. We will call \emph{barycenter of $E$} the point
\[
\bar(E) = \intmed_E x \, dx
\]
whenever the integral is well defined. Notice that, for instance, the barycenter is always defined if $E$ is (essentially) bounded, so in particular if $E$ is convex and with finite measure.
\end{defin}
It is useful to recall also the standard measure theoretic definitions of diameter and of connectedness.

\begin{defin}[Diameter of sets]
The \emph{diameter} of a set $E\subseteq\R^N$ is defined as
\[
{\rm diam} (E) = \inf \bigg\{ d>0:\, \H^{2N} \Big( \Big\{(x,y)\in E\times E,\, |y-x| > d\Big\}\Big)=0\bigg\}\,.
\]
Equivalently, ${\rm diam}(E)$ is the supremum of the distances among pairs of Lebesgue points of $E$.
\end{defin}

\begin{defin}[Connected sets]\label{defconn}
A finite perimeter set $E\subseteq\R^N$ is said \emph{connected} if for every $F\subseteq E$ with $0< |F|<|E|$ one has
\[
P(E) < P(F) + P(E\setminus F)\,.
\]
\end{defin}

A very well-known, immediate consequence of the above definitions is the following one.

\begin{lemma}\label{connbdd}
Let $E\subseteq\R^2$ be a planar, connected set. Then, $P(E)\geq 2 {\rm diam} (E)$.
\end{lemma}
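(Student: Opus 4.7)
The plan is to project $E$ onto the line through a near-diameter pair and combine a slicing inequality for the perimeter with the connectedness of $E$ to extract the factor of $2$. I would fix $\eta > 0$ small and pick Lebesgue points $x, y \in E$ with $|y-x| \geq {\rm diam}(E) - \eta$; then set $v = (y-x)/|y-x|$ and, for $t \in \R$, denote $\ell_t = \{z \in \R^2 : z \cdot v = t\}$. The starting bound is the one-dimensional slicing (coarea) formula for finite-perimeter sets,
\[
P(E) \;\geq\; \int_{\partial^* E} |\nu_E \cdot v|\, d\H^1 \;=\; \int_{\R} \H^0\bigl(\partial^* E \cap \ell_t\bigr)\, dt\,.
\]

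I would then establish two geometric facts. First, for a.e.\ $t$ in $S := \{t \in \R : \H^1(E \cap \ell_t) > 0\}$, the slice $E \cap \ell_t$ is a one-dimensional finite-perimeter set of positive measure, hence, modulo null sets, a countable union of non-degenerate intervals, so it contributes at least two points to $\partial^* E \cap \ell_t$. Second, $S$ is essentially an interval that contains both $x \cdot v$ and $y \cdot v$. For the latter, if $S$ had an essential gap $(a_1, a_2)$ with $a_1, a_2$ accumulation points of $S$, then for a.e.\ $t_0 \in (a_1, a_2)$ the standard slicing identity
\[
P\bigl(E \cap \{z\cdot v < t_0\}\bigr) + P\bigl(E \cap \{z \cdot v > t_0\}\bigr) = P(E) + 2\,\H^1(E \cap \ell_{t_0})
\]
would apply with a vanishing last term, yielding $P(E_1) + P(E_2) = P(E)$ for two positive-measure pieces $E_1, E_2$, contradicting the strict inequality in Definition~\ref{defconn}. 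The fact that $x \cdot v$ and $y \cdot v$ lie in the closure of $S$ follows because $x, y$ are Lebesgue points of $E$, so arbitrarily close slices have positive length.

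Combining the two facts gives $P(E) \geq 2\,\H^1(S) \geq 2|y - x| \geq 2({\rm diam}(E) - \eta)$, and sending $\eta \to 0$ concludes the proof. The main obstacle I expect is the second claim, where one must carefully translate the measure-theoretic definition of connectedness into an interval structure for $S$ via the slicing identity. As an alternative shortcut, by the Remark above one may reduce to smooth bounded open connected sets, for which $\pi_v(E)$ is automatically an interval (continuous image of a connected set) and the transverse slices are open one-dimensional sets whose boundary points can be counted directly.
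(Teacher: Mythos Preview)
The paper does not actually supply a proof of this lemma; it is stated as ``a very well-known, immediate consequence of the above definitions'' and left at that. So there is nothing to compare against, and the question is simply whether your argument is sound.

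It is. Projecting onto a near-diameter direction, using the Vol'pert slicing formula
\[
P(E) \geq \int_{\partial^* E} |\nu_E \cdot v|\, d\H^1 = \int_{\R} \H^0\bigl(\partial^* E \cap \ell_t\bigr)\, dt\,,
\]
and noting that a non-trivial one-dimensional slice contributes at least two boundary points, is exactly the standard route. The only point that deserves a word of care is the ``interval'' claim for $S$: rather than arguing via an essential gap $(a_1,a_2)$, it is cleaner to argue directly that $[x\cdot v,\, y\cdot v]\setminus S$ is null. Indeed, for a.e.\ $t_0$ in this set you have simultaneously $\H^1(E\cap\ell_{t_0})=0$, $\H^1(\partial^* E\cap\ell_{t_0})=0$, and the half-space cutting identity; since $x,y$ are Lebesgue points of $E$ lying strictly on opposite sides of $\ell_{t_0}$, both pieces $E\cap\{z\cdot v\lessgtr t_0\}$ have positive measure, and you contradict Definition~\ref{defconn}. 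This yields $\H^1(S)\geq |y-x|$ without needing $S$ to be an interval globally. Your alternative shortcut via smooth approximation is also perfectly valid and is closer in spirit to what the paper's Remark invites.
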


For our construction, we will need also the notion of $k$-symmetric sets and a property of $N$-symmetric ones.

\begin{defin}[$k$-symmetric sets]
A set $E\subseteq \R^N$ is said to be symmetric with respect to a hyperplane $\Pi$ if $E= R_\Pi (E)$, where $R_\Pi:\R^N\to\R^N$ is the reflection across to the hyperplane $\Pi$. A set $E\subseteq\R^N$ is said to be \emph{$k$-symmetric} for some integer $0\leq k\leq N$ if there exist $k$ orthogonal hyperplanes with respect to each of which $E$ is symmetric.
\end{defin}

When dealing with the Fraenkel asymmetry and symmetric sets, a simple but very useful observation is the following, for a proof see for instance~\cite{FMP}.

\begin{lemma}\label{lemmaNsymm}
Let $E\subseteq\R^N$ be an $N$-symmetric set with strictly positive volume, and assume just to fix the ideas that $E$ is symmetric with respect to each coordinate hyperplane $\Pi_i=\{x_i=0\}$, $1\leq i \leq N$. Then, the Fraenkel asymmetry of $E$ defined in~(\ref{deffras}) and the barycentric asymmetry of $E$ defined in~(\ref{defbaras}) satisfy the property
\[
\lambda(E) \leq \lambda_0(E) \leq 2^N \lambda(E)\,.
\]
\end{lemma}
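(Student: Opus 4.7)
The plan is to dispatch the two inequalities separately. The left one is essentially tautological: the $N$-symmetry of $E$ forces $\int_E x_i\,dx=-\int_E x_i\,dx=0$ for every $i$ (using the $R_i$-invariance of $E$ and the change of variable $x\mapsto R_i x$), so $\bar(E)=0$ and consequently $\lambda_0(E)=|E\Delta B(m)|/|E|$; since the origin is then one admissible center in the infimum~(\ref{deffras}), $\lambda(E)\le\lambda_0(E)$ follows at once.

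For the upper bound, I plan to exploit simultaneously all $2^N$ sign-flip reflections. I pick $x_0\in\R^N$ attaining the infimum in~(\ref{deffras}), so that $|E\Delta(x_0+B(m))|=\lambda(E)|E|$. Since each of the $2^N$ compositions $\sigma$ of the coordinate reflections $R_1,\dots,R_N$ is a symmetry of both $E$ and $B(m)$, applying $\sigma$ to the above identity yields $|E\Delta B_\sigma|=\lambda(E)|E|$ for every such $\sigma$, where $B_\sigma:=\sigma x_0+B(m)$; using $|E|=|B_\sigma|=m$ this gives $|E\setminus B_\sigma|=\lambda(E)m/2$ for each $\sigma$.

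The geometric heart of the proof is the set-theoretic inclusion $\bigcap_\sigma B_\sigma\subseteq B(m)$, which I would verify via the coordinate computation
\[
\max_\sigma|y-\sigma x_0|^2=\sum_{i=1}^N(|y_i|+|(x_0)_i|)^2\ge|y|^2,
\]
valid for every $y\in\R^N$, the maximum over each $\sigma_i\in\{\pm 1\}$ being attained by taking $\sigma_i(x_0)_i$ opposite in sign to $y_i$: if $y\in\bigcap_\sigma B_\sigma$, the left-hand side is bounded by the squared radius $r^2$ of $B(m)$, forcing $|y|\le r$. Dualizing the inclusion yields $E\setminus B(m)\subseteq\bigcup_\sigma(E\setminus B_\sigma)$, hence $|E\setminus B(m)|\le 2^N\cdot\lambda(E)m/2=2^{N-1}\lambda(E)m$; combining with $|E\Delta B(m)|=2|E\setminus B(m)|$ (from $|E|=|B(m)|$) gives $\lambda_0(E)\le 2^N\lambda(E)$, the factor $2^N$ being exactly the cardinality of the reflection group. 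This intersection inclusion is the only non-routine step; everything else is just bookkeeping with the symmetries.
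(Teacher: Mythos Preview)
Your argument is correct: the first inequality is immediate (and in fact holds for any set, as the paper remarks right after the statement), and for the second your inclusion $\bigcap_\sigma B_\sigma\subseteq B(m)$ via the coordinate estimate $\max_\sigma|y-\sigma x_0|^2=\sum_i(|y_i|+|(x_0)_i|)^2\ge|y|^2$ is exactly the key step, after which the bound $\lambda_0(E)\le 2^N\lambda(E)$ follows by subadditivity. The paper does not give its own proof but simply refers to~\cite{FMP}, and your argument is essentially the standard one found there.
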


We can quickly discuss the meaning of this lemma. The first inequality is obvious for any set $E$, since $\lambda(E)$ is the infimum of the volumes of the rescaled differences between $E$ and translations of the ball $B(m)$ with $|B(m)|=|E|$, while $\lambda_0$ corresponds to a particular translation. The interesting part is then the second inequality; notice that, as the above example with the union of two balls shows, the second inequality is false for a generic set $E$. The point of this lemma is then that considering the barycentric ball $\bar(E)+B(m)$ is an optimal choice (up to a multiplicative constant) in the special case of $N$-symmetric sets.

\section{\label{secproof}Proof of the main result}

This section is devoted to prove Theorem~\mref{main}. This will be obtained as an immediate consequence of the following technical property.

\begin{prop}\label{allhere}
If for some $1\leq k\leq N$ there exist constants $C_k(N,D)\geq 2^N C_F(N)$ such that the inequality
\begin{equation}\label{inductive}
\lambda_0(E) \leq C_k(N,D) \sqrt{\delta(E)}
\end{equation}
holds true for every $k$-symmetric set $E$ with diameter less than $D |E|^{1/N}$, then the same is true also for any $(k-1)$-symmetric set $E$ with diameter less than $D |E|^{1/N}$ and with constant
\begin{equation}\label{defCk-1}
C_{k-1}(N,D) = C_3 C_k(N,3D) D\,,
\end{equation}
where $C_3=C_3(N)$ is defined in~(\ref{defC3}). In particular, also $C_{k-1}(N,D)\geq 2^N C_F(N)$.
\end{prop}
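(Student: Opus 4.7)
My plan is to associate to any $(k-1)$-symmetric set $E$ a $k$-symmetric companion $E'$ obtained by reflecting one half of $E$ across a well-chosen hyperplane, apply the inductive hypothesis~\eqref{inductive} to $E'$, and then transfer the estimate back to $E$ through triangle inequalities on symmetric differences, crucially using that $E$ is bounded.

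After a translation and rotation, I may assume $E$ is symmetric with respect to the coordinate hyperplanes $\Pi_i=\{x_i=0\}$ for $i=1,\ldots,k-1$. I then pick $h\in\R$ so that $\Pi_h=\{x_k=h\}$ splits $E$ into two halves $E^\pm=E\cap\{\pm(x_k-h)>0\}$ of equal measure $|E|/2$, let $R$ denote the reflection across $\Pi_h$, and define
\[
E_r=E^+\cup R(E^+),\qquad E_l=E^-\cup R(E^-).
\]
Both sets are $k$-symmetric (they retain the symmetries across $\Pi_1,\ldots,\Pi_{k-1}$ and gain symmetry across $\Pi_h$) and have measure exactly $|E|$. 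Since $P(E_r)+P(E_l)\leq 2P(E)$, at least one of them, call it $E'$, satisfies $\delta(E')\leq\delta(E)$. A direct computation (two points of $E'$ in opposite reflected halves have squared distance at most $5\,{\rm diam}(E)^2$) gives ${\rm diam}(E')\leq\sqrt{5}\,D|E|^{1/N}<3D|E|^{1/N}$, so the inductive hypothesis applies with parameter $3D$ and yields $|E'\,\Delta\,(\bar(E')+B(m))|\leq C_k(N,3D)\sqrt{\delta(E)}\,|E|$.

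The rest is a triangle inequality for $|E\,\Delta\,(\bar(E)+B(m))|$ with three pieces: the one above, $|E\Delta E'|$, and the sym-diff $|(\bar(E')+B(m))\,\Delta\,(\bar(E)+B(m))|$ of two translated copies of $B(m)$. For $|E\Delta E'|$, an unfolding gives $|E\Delta E'|=\tfrac12|E\Delta R(E)|$; introducing the Fraenkel centre $x^*$ of $E$, for which $|E\,\Delta\,(x^*+B(m))|\leq C_F(N)\sqrt{\delta(E)}|E|$ by sharp Fraenkel, a standard median/mean comparison gives $|h-x^*_k|\leq C_N\lambda(E)|E|^{1/N}$ (the $x_k$-marginals of $E$ and of $x^*+B(m)$ are $L^1$-close, which pushes their equal-measure levels together), and a triangle inequality then yields $|E\Delta E'|\leq C'\sqrt{\delta(E)}|E|$. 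For the third piece, $E^+\subseteq E\cap E'$ implies ${\rm diam}(E\cup E')\leq 4D|E|^{1/N}$; the identity $|E|\,(\bar(E')-\bar(E))=\int_{E'\setminus E}x\,dx-\int_{E\setminus E'}x\,dx$ gives $|\bar(E)-\bar(E')|\leq CD\sqrt{\delta(E)}|E|^{1/N}$, and since the sym-diff of two balls of mass $m$ translated by $t$ is bounded by $C_N|t|/|E|^{1/N}\cdot|E|$, the third piece is $\leq C''D\sqrt{\delta(E)}|E|$. Collecting, $\lambda_0(E)\leq(C'+C_k(N,3D)+C''D)\sqrt{\delta(E)}$, and choosing $C_3=C_3(N)$ large enough that the bracket is dominated by $C_3C_k(N,3D)D$ (possible thanks to the assumption $C_k\geq 2^NC_F$) gives~\eqref{defCk-1}; the lower bound $C_{k-1}\geq 2^NC_F$ then follows automatically from the product formula.

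I expect the main obstacle to be the estimate $|E\Delta R(E)|\leq C\sqrt{\delta(E)}|E|$: although it ultimately follows from sharp Fraenkel, the quantitative mean/median comparison for the $x_k$-marginal of $E$ is the delicate step and is the one place where the Fraenkel inequality really has to be invoked.
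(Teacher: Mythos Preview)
Your approach is correct and reaches the same conclusion, but it is organised differently from the paper's, and the comparison is worth recording. The paper reflects $E$ across the hyperplane $\{x_1=0\}$ \emph{through the barycenter} $\bar(E)=O$; the two halves $E^\pm$ then need not have equal volume, and the mismatch $\eps=\big|\,|E^+|-\tfrac12\,\big|$ is controlled by $D\sqrt{\delta(E)}$ via the Fraenkel inequality (Lemma~\ref{lemmaepssqrtdelta}). Both reflected sets $E',E''$ (of volumes $1\pm2\eps$) are kept in play, and an auxiliary quantity $\eta$ --- the distance of the projection of $\int_{E^+}x\,dx$ onto $\Pi_1$ from the origin --- is bounded through Lemma~\ref{trilem} and the Fraenkel inequality; finally $\delta(E')+\delta(E'')$ is compared to $\delta(E)$ with an $\eps^2$ correction. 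You instead reflect across the \emph{median} hyperplane $\{x_k=h\}$, forcing $|E_r|=|E_l|=|E|$; this makes the deficit comparison $\delta(E')\le\delta(E)$ a one-liner and eliminates the whole $\eps$-analysis. The cost is that you must bound $|E\Delta E'|=\tfrac12|E\Delta R(E)|$, and this is where the Fraenkel inequality enters for you, via the stability of the median of the $x_k$-marginal under the $L^1$ perturbation $E\rightsquigarrow x^*+B(m)$. (A small caveat: what you call a ``median/mean comparison'' is really the fact that the ball's marginal has density bounded below near its median; the bound $|h-x^*_k|\le C_N\lambda(E)|E|^{1/N}$ holds only once $\lambda(E)$ is below a fixed dimensional threshold, and you should dispose of the complementary case $\lambda(E)\ge c_N$ by the trivial estimate $\lambda_0(E)\le 2$.) Both routes invoke the Fraenkel inequality at exactly one spot and use the diameter bound to convert symmetric-difference control into barycenter control; yours has lighter bookkeeping on the deficit side, while the paper's choice of reflecting through the barycenter keeps $\bar(E)$ on the symmetry hyperplane from the outset and trades the median argument for the $(\eps,\eta)$ pair.
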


We can immediately see that our main result readily follows from the above proposition.

\proofof{Theorem~\mref{main}}
If $E$ is a $N$-symmetric set with diameter less than $D |E|^{1/N}$, then putting together the standard quantitative isoperimetric inequality~(\ref{ShQII}) and Lemma~\ref{lemmaNsymm} we get
\[
\lambda_0(E) \leq 2^N \lambda(E) \leq 2^N C_F(N) \sqrt{\delta(E)}\,,
\]
thus the inequality~(\ref{inductive}) is true for $N$-symmetric sets with constant
\[
C_N(N,D) = 2^N C_F(N)\,.
\]
Applying then $N$ times Proposition~\ref{allhere}, we get the validity of~(\ref{inductive}) for $0$-symmetric sets with constant
\begin{equation}\label{defCND}
C_0(N,D)= 2^N C_F(N) C_3^N 3^{\frac{N(N-1)}2} D^N\,.
\end{equation}
Since any set is $0$-symmetric, we have proved~(\ref{ourineq}) for a generic set $E\subseteq \R^N$ with diameter less than $D |E|^{1/N}$ and with constant $C(N,D)=C_0(N,D)$. The proof is then concluded.
\end{proof}

The proof of Proposition~\ref{allhere} will take this whole section. We start by considering a $(k-1)$-symmetric set $E$, with diameter less than $D|E|^{1/N}$. We assume that
\begin{align}\label{assonE}
|E| = 1\,, && \bar(E)=O\,, && R_{\Pi_i}(E)=E \quad \forall\, N-k+1<i\leq N\,.
\end{align}
Notice that, since our result is scaling invariant, we are allowed to fix the volume of $E$, and we have fixed $|E|=1$ just for the ease of notation. Again for convenience, and up to a translation and a rotation, we can assume without loss of generality that the barycenter is in the origin and that $E$ is symmetric with respect to the last $k-1$ coordinate hyperplanes. Our plan is to modify $E$ so to become symmetric also with respect to the hyperplane $\Pi_1=\{x_1=0\}$, so becoming $k$-symmetric. To do so, we begin by calling
\begin{align}\label{defE+-}
E^+=E \cap \{x_1>0\}\,, && E^-= E\cap\{x_1<0\}
\end{align}
the two parts in which $E$ is divided by the hyperplane $\{x_1=0\}$, and
\begin{equation}\label{defeps}
\eps = \bigg|\frac 12 - |E^+| \bigg|\,.
\end{equation}
The number $\eps$ determines how different are the volumes of the two parts $E^+$ and $E^-$, in particular $\eps=0$ only if the hyperplane $\Pi_1$ divides $E$ in two parts of equal volume. The diameter constraint allows to give a simple estimate of $\eps$ in terms of $\delta(E)$ as follows.

\begin{lemma}\label{lemmaepssqrtdelta}
There exists a constant $C_1$, only depending on $N$ and defined in~(\ref{defC1}), such that
\[
\eps \leq C_1 D \sqrt{\delta(E)}\,.
\]
\end{lemma}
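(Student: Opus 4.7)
The plan is to deduce the estimate from the sharp Fraenkel inequality~(\ref{ShQII}) by exploiting the barycentric assumption $\bar(E)=O$. The idea is that if $E$ is close, up to a small translation, to a ball in the sense of Fraenkel, then the condition $\bar(E)=O$ forces the centre of that ball to be near the origin, which in turn forces the hyperplane $\Pi_1=\{x_1=0\}$ to split $E$ almost in half.

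First I would observe that $E\subseteq B(O,D)$: this is an immediate consequence of $\text{diam}(E)\leq D$ and $\bar(E)=O$, via the averaging estimate $|x|=|x-\bar(E)|\leq\intmed_E|x-y|\,dy\leq D$, valid for a.e.\ $x\in E$. Applying~(\ref{ShQII}) to $E$ then yields a point $x_0\in\R^N$ such that, writing $B=B(1)$ for the ball of volume $1$ centred at $O$ and $r_0$ for its radius,
$$|E\,\Delta\,(x_0+B)|\leq C_F(N)\sqrt{\delta(E)}.$$
I may assume $\delta(E)$ small enough that $E\cap(x_0+B)$ has positive measure (if not, then the desired estimate follows trivially from $\eps\leq 1/2$, upon choosing $C_1$ large enough). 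In this regime $|x_0|\leq D+2r_0\leq 3D$, since $D\geq 2r_0$ as every set of unit volume has diameter at least that of the unit-volume ball.

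Next I use the barycentric information: since $B$ has volume $1$ and is centred at the origin, the barycenter of $x_0+B$ equals $x_0$, so together with $\int_E x\,dx=0$ I get
$$|x_0|=\Big|\int_{x_0+B}x\,dx-\int_E x\,dx\Big|\leq\int_{E\,\Delta\,(x_0+B)}|x|\,dx\leq 3D\,C_F(N)\sqrt{\delta(E)}.$$
In particular the first component $x_{0,1}$ of $x_0$ satisfies the same bound.

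Finally I compare $|E^+|$ with the volume of $(x_0+B)\cap\{x_1>0\}$. Setting $f(t)=|(x_0+B)\cap\{x_1>t\}|$, one has $f(x_{0,1})=1/2$ (a translated half-ball), while $|f'(t)|\leq\omega_{N-1}r_0^{N-1}$ (the largest cross-section of the ball). Hence $\big||(x_0+B)\cap\{x_1>0\}|-1/2\big|\leq\omega_{N-1}r_0^{N-1}|x_{0,1}|$, and combining with the trivial bound $\big||E^+|-|(x_0+B)\cap\{x_1>0\}|\big|\leq|E\,\Delta\,(x_0+B)|$ gives
$$\eps\leq\omega_{N-1}r_0^{N-1}|x_{0,1}|+C_F(N)\sqrt{\delta(E)}\leq C_1 D\sqrt{\delta(E)}$$
for a suitable constant $C_1=C_1(N)$, as desired. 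The only mildly delicate point is keeping track of the small-deficit regime needed for the Fraenkel-optimal ball to be geometrically meaningful (i.e.\ to genuinely overlap with $E$), but as noted above the complementary regime is trivial and is absorbed into $C_1$.
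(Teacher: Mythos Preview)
Your proof is correct and follows essentially the same strategy as the paper's: pick a Fraenkel ball via~(\ref{ShQII}), use the barycentric constraint $\bar(E)=O$ together with the diameter bound to control the first coordinate of the Fraenkel centre by $D\lambda(E)$, and then compare half-space volumes through the maximal cross-section of the ball. The only cosmetic differences are that you first establish $E\subseteq B(O,D)$ and bound $|x_0|$ (which makes the estimate $\int_{E\Delta(x_0+B)}|x|\,dx\leq 3D\,|E\Delta(x_0+B)|$ fully explicit, at the price of a slightly larger constant), and that your caveat about the small-deficit regime is actually unnecessary: since $\lambda(E)<2$ always, any optimal Fraenkel ball already meets $E$ in positive measure.
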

\begin{proof}
Let us call $B_F$ a ``Fraenkel ball'', that is, a ball realizing the infimum in~(\ref{ShQII}). The fact that such a ball exists is very simple; indeed, taking a ball centered in a Lebesgue point of $E$ ensures that $\lambda(E)$ is strictly smaller than $2$ (which corresponds to a ball having negligible intersection with $E$). We can than take a sequence of points $x_j$ with the property that the ratio $|E\Delta ( x_j + B)|/|E|$ converges to $\lambda(E)$. This sequence must be bounded since otherwise the ratio would converge to $2$, while $\lambda(E)<2$, and then any limit point of the sequence $\{x_j\}$ corresponds to a Fraenkel ball. Let us now call
\begin{equation}\label{defepsF}
\eps_F = \bigg|\big| B_F \cap \{x_1>0\} \big| - \frac 12 \bigg|
\end{equation}
the difference between the volume of $B_F$ and $1/2$. Then, $\eps_F=0$ if and only if the center of $B_F$ lies in the hyperplane $\{x_1=0\}$. More in general, if we call $d$ the distance between this hyperplane and the center of $B_F$, since the radius of $B_F$ is $1/\omega_N^{1/N}$, we have
\begin{equation}\label{spum}
\eps_F \leq \frac{\omega_{N-1} }{\omega_N^{\frac{N-1}N}}\, d\,.
\end{equation}
Let us then call for brevity  $G^+ = B_F\setminus E$ and $G^- = E \setminus B_F$, which are two sets with volume $|B_F\Delta E|/2$ each, and notice that
\[\begin{split}
\frac{\omega_N^{\frac{N-1}N}}{\omega_{N-1}}\, \eps_F &\leq  d = \bigg| \int_{B_F} x_1 \,dx\bigg|
= \bigg| \int_E x_1 \,dx+\int_{G^+} x_1 \,dx-\int_{G^-} x_1 \,dx\bigg|
= \bigg|\int_{G^+} x_1 \,dx-\int_{G^-} x_1 \,dx\bigg|\\
&\leq D \big(|G^+|+|G^-|\big) = D |B_F\Delta E| = D \lambda(E)\,,
\end{split}\]
where we have used~(\ref{spum}) and the facts that $\bar(E)=O$, the diameter of $E$ is at most $D$, and $B_F$ is a Fraenkel ball for $E$. We have thus found that
\[
\eps_F \leq \frac{D \omega_{N-1}}{\omega_N^{\frac{N-1}N}}\,  \lambda(E)  \,.
\]
Moreover, notice that by~(\ref{defeps}) and~(\ref{defepsF})
\[
\lambda(E) = |E\Delta B_F| \geq \Big|\big| E \cap \{x_1>0\}\big| - \big| B_F \cap \{x_1>0\}\big|\Big| \geq |\eps - \eps_F|\,.
\]
The last two inequalities, together with~(\ref{ShQII}) and with the fact that $D\geq 2 \omega_N^{-1/N}$, imply that
\[\begin{split}
\eps &\leq \eps_F + |\eps - \eps_F| \leq \frac{D \omega_{N-1}}{\omega_N^{\frac{N-1}N}}\,  \lambda(E) + \lambda(E)
= \lambda(E) \bigg( \frac{D \omega_{N-1}}{\omega_N^{\frac{N-1}N}}  + 1 \bigg)\\
&\leq  C_F(N) \Bigg( \frac{D \omega_{N-1}}{\omega_N^{\frac{N-1}N}}  + 1 \Bigg) \sqrt{\delta(E)}
\leq  C_F(N) D  \Bigg( \frac{\omega_{N-1}}{\omega_N^{\frac{N-1}N}}  + \frac{\omega_N^{1/N}}2 \Bigg) \sqrt{\delta(E)}
\,.
\end{split}\]
This concludes the thesis with
\begin{equation}\label{defC1}
C_1 =  C_F(N)\Bigg( \frac{\omega_{N-1}}{\omega_N^{\frac{N-1}N}}  + \frac{\omega_N^{1/N}}2 \Bigg)\,.
\end{equation}
\end{proof}

\begin{remark}\label{epsnotbad}
An obvious consequence of the above lemma is that, if $8\eps \geq \lambda_0(E)$, then~(\ref{ourineq}) is true with $8C_1 D$ in place of $C(N,D)$. In other words, in proving Theorem~\mref{main} we can assume for free that $\eps\leq \lambda_0(E)/8\leq 1/4$. By~(\ref{defeps}), this means that the volumes of $E^+$ and $E^-$ are between $1/4$ and $3/4$, that is, the hyperplane $\Pi_1$ is not necessarily dividing $E$ in two parts of equal volume, but they are also not too much different.
\end{remark}

We collect now an extremely simple observation, which will be used later.
\begin{lemma}\label{trilem}
Let $G,\, H$ and $\widetilde H$ be three sets so that $|\widetilde H|=|G|$ and $|H\Delta \widetilde H| = \big| |H| - |G|\big|$. Then
\[
2 |G \Delta H|  \geq |G\Delta \widetilde H| \geq \frac {2|G|}{D'} \big| \bar(G) - \bar (\widetilde H)\big|\,,
\]
where we call $D'$ the diameter of $G\cup \widetilde H$.
\end{lemma}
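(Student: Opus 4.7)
The statement is really two independent inequalities, and my plan is to prove them separately. Neither really uses geometric information about $H$ and $\widetilde H$: although the hypothesis $|H\Delta\widetilde H|=\big||H|-|G|\big|$ together with $|\widetilde H|=|G|$ does force one of $H,\widetilde H$ to contain the other up to a null set, I would not appeal to this; both bounds will follow from the stated numerical identities.

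For the first inequality $|G\Delta\widetilde H|\leq 2|G\Delta H|$, I would apply the triangle inequality for the symmetric difference, $|G\Delta\widetilde H|\leq|G\Delta H|+|H\Delta\widetilde H|$, which follows at once from the set inclusion $A\Delta C\subseteq(A\Delta B)\cup(B\Delta C)$. By hypothesis the second term equals $\big||H|-|G|\big|$, and in turn the elementary bound $\big||A|-|B|\big|\leq|A\Delta B|$ shows $\big||H|-|G|\big|\leq|G\Delta H|$. Chaining these two observations immediately yields the claim.

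For the second inequality I would exploit $|G|=|\widetilde H|$ to let the $G\cap\widetilde H$ portion cancel out of the difference of barycenters. Setting $G_1:=G\setminus\widetilde H$ and $G_2:=\widetilde H\setminus G$, which share the common volume $m:=|G\Delta\widetilde H|/2$, one has
\[
\bar(G)-\bar(\widetilde H) = \frac1{|G|}\Big(\int_{G_1} x\,dx-\int_{G_2} y\,dy\Big).
\]
The key step I would then use is to symmetrize each single integral into a double integral over $G_1\times G_2$, which is allowed precisely because $|G_1|=|G_2|=m$; this rewrites the right hand side as
\[
\bar(G)-\bar(\widetilde H) = \frac1{|G|\,m}\int_{G_1\times G_2}(x-y)\,dx\,dy.
\]
Taking moduli and using $|x-y|\leq D'$ for every $(x,y)\in G_1\times G_2\subseteq(G\cup\widetilde H)^2$ gives
\[
\big|\bar(G)-\bar(\widetilde H)\big|\leq\frac{D'\,m}{|G|}=\frac{D'\,|G\Delta\widetilde H|}{2|G|},
\]
which is exactly the second inequality after rearranging.

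There is no genuine obstacle in this lemma; it is a short and elementary computation, as the authors themselves advertise. The only mildly non-routine move is the symmetrization of the two single integrals into the double integral on $G_1\times G_2$, which is what allows the diameter bound to be applied uniformly and produces the sharp numerical factor $1/(2|G|)$ in front of $D'\,|G\Delta\widetilde H|$.
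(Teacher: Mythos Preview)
Your proof is correct and follows essentially the same route as the paper's. For the first inequality both you and the paper use the triangle inequality for symmetric difference together with $|H\Delta\widetilde H|=\big||H|-|G|\big|\leq|G\Delta H|$; for the second, both cancel the common part $G\cap\widetilde H$ and bound using the diameter $D'$. The only cosmetic difference is that the paper estimates $\big|\int_{G_1}x\,dx-\int_{G_2}x\,dx\big|$ directly by $D'|G_1|$ (implicitly using that the barycenters of $G_1$ and $G_2$ lie within distance $D'$ of each other), whereas you make this explicit via the double-integral symmetrization over $G_1\times G_2$; the two are equivalent and yield the identical constant.
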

\begin{proof}
We notice that, since $|\widetilde H|=|G|$, then $|H\Delta \widetilde H| = \big| |H| - |G|\big|$ is equivalent to say that $|H\Delta \widetilde H| = \big| |H| - |\widetilde H|\big|$, which in turn is equivalent to say that either $H\subseteq \widetilde H$ or $\widetilde H\subseteq H$. The first inequality of the claim readily follows since
\[
|G\Delta \widetilde H| \leq |G\Delta H| + |H \Delta \widetilde H| = |G\Delta H| + \big| |H| - |G|\big|  \leq 2 |G\Delta H|\,.
\]
Concerning the second inequality, it is sufficient to recall that $|G|=|\widetilde H|$ and that the distance between any point of $G$ and any point of $\widetilde H$ is at most $D'$, getting the thesis since
\[\begin{split}
\big| \bar(G) - \bar (\widetilde H)\big| &= \frac 1{|G|}\, \bigg| \int_G x\, dx - \int_{\widetilde H} x\, dx\bigg|
= \frac 1{|G|}\, \bigg| \int_{G\setminus \widetilde H} x\, dx - \int_{\widetilde H\setminus G} x\, dx\bigg|
\leq \frac {D'}{|G|}\, \big| G\setminus \widetilde H \big|\\
&= \frac {D'}{2|G|}\, \big|  G\Delta \widetilde H \big|\,.
\end{split}\]
\end{proof}

Let us now present the proof of Proposition~\ref{allhere}

\proofof{Proposition~\ref{allhere}}
Let us take any $(k-1)$-symmetric set $E$ satisfying~(\ref{assonE}). We restrict ourselves to consider the case $\eps\leq \lambda_0(E)/8\leq 1/4$. Indeed, if this is not the case, as noticed in Remark~\ref{epsnotbad} we already have the validity of~(\ref{ourineq}) for $E$ with constant $8C_1 D$, and then also~(\ref{inductive}) is valid for the $(k-1)$-symmetric set $E$ with constant $C_{k-1}(N,D)$ as soon as $C_{k-1}(N,D) \geq 8 C_1 D$. \par

Let us call again $E^\pm$, as in~(\ref{defE+-}), the two parts of $E$ with $\{x_1\gtrless 0\}$. Let also
\[
P = \int_{E^+} x\, dx= |E^+| \bar(E^+)\,.
\]
The point $P$ is of course in the halfspace $\{x_1>0\}$. Moreover, we call
\[
\eta = \big| P - P_1\,{\rm e}_1\big| = \frac 12\, \big|  P + R_{\Pi_1} (P) \big| = |\widehat P|
\]
the distance between the origin and the point $\widehat P$, which is the projection of $P$ onto the hyperplane $\Pi_1$. Let us now define the two sets
\begin{align}\label{defE'''}
E' = E^+ \cup R_{\Pi_1} (E^+) \,, && E'' = E^- \cup R_{\Pi_1} (E^-) \,.
\end{align}
Notice that $E'$ and $E''$ are two sets which are both symmetric with respect to $\Pi_1$, and by~(\ref{assonE}) they keep the symmetries with respect to $R_{\Pi_i}$ for $N-k+1<i\leq N$, so they are actually $k$-symmetric. Moreover, calling $P'=\bar(E')$, we have
\[
|P'| = \big|\bar(E')\big| = \bigg|\intmed_{E'} x\,dx \bigg| = \frac{\big|P + R_{\Pi_1}(P)\big|}{|E'|}=  \frac{\widehat P}{|E^+|} =\frac \eta {|E^+|} =:\eta' \,,
\]
and thanks to the assumption that $\eps\leq 1/4$ we have
\[
\frac 43\, \eta < \eta' < 4\eta\,.
\]
To help reading the proof, Figure~\ref{nicepic} shows a possible set $E$ in the left side, and the corresponding set $E'$ in the right.
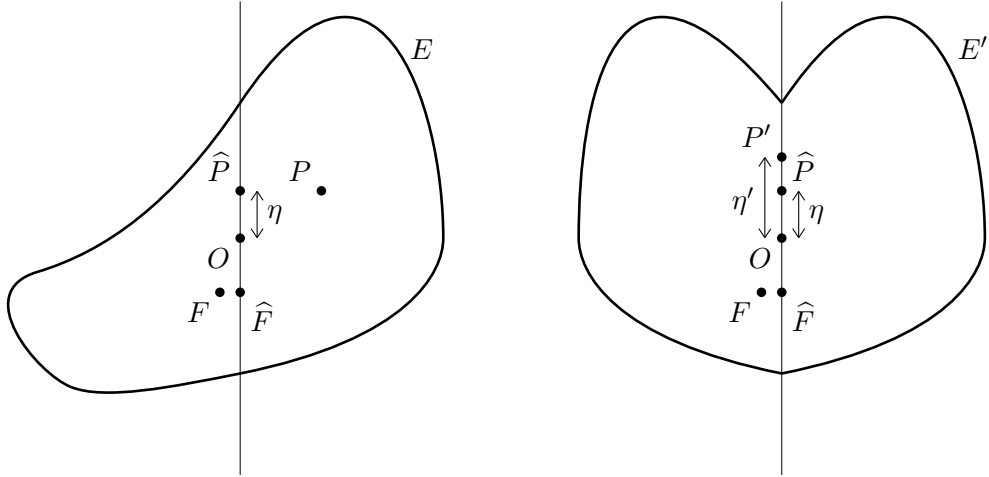
\begin{figure}[thbp]
\begin{tikzpicture}[>=>>>, scale=0.9] 
\fill (0,0) circle (2pt);
\draw (0,0) node[anchor=north east] {$O$};
\fill (-.3,-.8) circle (2pt);
\draw (-.3,-.8) node[anchor=north east] {$F$};
\fill (0,-.8) circle (2pt);
\draw (0,-.8) node[anchor=north west] {$\widehat F$};
\draw[line width=0.25pt] (0,3.5) -- (0,-3.5);
\fill (1.2,0.7) circle (2pt);
\draw (1.2,0.7) node[anchor=south east] {$P$};
\fill (0,0.7) circle (2pt);
\draw (0,0.7) node[anchor=south east] {$\widehat P$};
\draw (3,2.5) node[anchor=south east] {$E$};
\draw [<->] (.25,0) -- (0.25,0.7);
\draw (.25,0.35) node[anchor=west] {$\eta$};
\draw[line width=1pt] (0,-2) .. controls (2.5,-1.5) and (3,-.5) .. (3,0) .. controls (3,2) and (2,5) .. (0,2) .. controls (-1, 0.5) and (-2, -0.2) .. (-3,-0.5) .. controls (-4,-0.8) and (-3,-2) .. (-2.5,-2.2) .. controls (-2,-2.4) and (-1,-2.2) .. (0,-2);
%% Seconda copia
\fill (8,0) circle (2pt);
\draw (8,0) node[anchor=north east] {$O$};
\fill (7.7,-.8) circle (2pt);
\draw (7.7,-.8) node[anchor=north east] {$F$};
\fill (8,-.8) circle (2pt);
\draw (8,-.8) node[anchor=north west] {$\widehat F$};
\draw[line width=0.25pt] (8,3.5) -- (8,-3.5);
\fill (8,1.2) circle (2pt);
\draw (8,1.2) node[anchor=south east] {$P'$};
\fill (8,0.7) circle (2pt);
\draw (8,0.7) node[anchor=south west] {$\widehat P$};
\draw (11.2,2.5) node[anchor=south east] {$E'$};
\draw [<->] (8.25,0) -- (8.25,0.7);
\draw (8.25,0.35) node[anchor=west] {$\eta$};
\draw [<->] (7.75,0) -- (7.75,1.2);
\draw (7.75,0.55) node[anchor=east] {$\eta'$};
\draw[line width=1pt] (8,-2) .. controls (10.5,-1.5) and (11,-.5) .. (11,0) .. controls (11,2) and (10,5) .. (8,2);
\draw[line width=1pt] (8,-2) .. controls (5.5,-1.5) and (5,-.5) .. (5,0) .. controls (5,2) and (5.5,5) .. (8,2);
\end{tikzpicture}
\caption{A possible set $E$ in the proof of Proposition~\ref{allhere} and the corresponding set $E'$ with the quantities $\eta$ and $\eta'$.}\label{nicepic}
\end{figure}
Notice that, as in the figure, the barycenter $P'$ of $E'$ (which by construction belongs to $\Pi_1$) has distance $\eta'$ from the origin which is larger than $\eta$ but controlled by it. In the very same way we define the barycenter $P''$ of $E''$ and the constant $\eta'' \in (\frac 43\,\eta,4\eta)$.\par

We define now $B_\eta'$ and $B_\eta''$ the balls centered in $P'$ and $P''$ and with volumes $|E'|$ and $|E''|$, thus $1\pm 2\eps$. Moreover, we denote by $\widetilde B'$ and $\widetilde B''$ the two balls centered in $P'$ and $P''$ with unit volume, and by $B$ the ball centered in the origin with unit volume. We can then easily calculate, keeping in mind the definitions and the symmetries of the sets,
\[\begin{split}
\lambda_0(E') &= \big| E' \Delta B_\eta'\big|
= 2 \Big| \big(E' \Delta B_\eta'\big)\cap \{x_1>0\}\Big|\\
&\geq 2 \Big| \big(E' \Delta B\big)\cap \{x_1>0\}\Big|-2 \Big| \big(B \Delta B_\eta'\big)\cap \{x_1>0\}\Big|\\
&=2 \Big| \big(E \Delta B\big)\cap \{x_1>0\}\Big|-\big| B \Delta B_\eta'\big|
\geq 2 \Big| \big(E \Delta B\big)\cap \{x_1>0\}\Big|-\big| B \Delta \widetilde B'\big| - \big| \widetilde B' \Delta B_\eta'\big|\\
&\geq 2 \Big| \big(E \Delta B\big)\cap \{x_1>0\}\Big|-\frac{4\omega_{N-1}}{\omega_N^{\frac{N-1}N}}\, \eta - 2\eps\,.
\end{split}\]
Concerning the last inequality, one only has to notice that $| \widetilde B' \Delta B_\eta'|=2\eps$ by construction, since they are two balls with the same center, while $B \Delta \widetilde B'$ is contained in a cylinder of radius $\omega_N^{-1/N}$ and height $\eta'<4\eta$. Adding this inequality to the symmetric one obtained with $E''$ in place of $E'$, and observing that
\[
\Big| \big(E \Delta B\big)\cap \{x_1>0\}\Big|+\Big| \big(E \Delta B\big)\cap \{x_1<0\}\Big| = \big|E \Delta B\big| = \lambda_0(E)\,,
\]
we obtain then
\[
\lambda_0(E) \leq \frac{\lambda_0(E') + \lambda_0(E'')}2 + \frac{4\omega_{N-1}}{\omega_N^{\frac{N-1}N}}\, \eta + 2\eps\,,
\]
which, since we are assuming $\eps\leq \lambda_0(E)/8$, implies
\begin{equation}\label{usend}
\lambda_0(E) \leq \lambda_0(E') + \lambda_0(E'') + \frac{8\omega_{N-1}}{\omega_N^{\frac{N-1}N}}\, \eta \,.
\end{equation}
Now, let us call $B_F$ a Fraenkel ball for $E$, and call $F$ its center; moreover, call $\widehat F$ and $\widehat P$ the projections of $F$ and $P$ onto $\Pi_1$. By construction,
\begin{align}\label{guaqui}
P' = \frac{\widehat P}{|E|^+}\,, && P'' = -\frac{\widehat P}{|E|^-}\,.
\end{align}
We assume now that, as shown in Figure~\ref{nicepic},
\begin{equation}\label{awlog}
\widehat P \cdot \widehat F \leq 0\,,
\end{equation}
and we will work on the half-space $\{x_1>0\}$. By~(\ref{guaqui}), if~(\ref{awlog}) is not true then the very same construction can be done in the half-space $\{x_1<0\}$. Let us now call $\widehat B$ the ball centered in $F$ and such that
\begin{equation}\label{constr}
\big| \widehat B \cap \{x_1>0\}\big| = \big| E \cap \{x_1>0\}\big| \,.
\end{equation}
We can now apply Lemma~\ref{trilem} to the three sets
\begin{align*}
G = E\cap \{x_1>0\} \,, && H = B_F \cap \{x_1>0\}\,, && \widetilde H= \widehat B\cap \{x_1>0\}\,.
\end{align*}
Notice that this is admissible because the assumption $|\widetilde H|=|G|$ is true by~(\ref{constr}), and the assumption $|H\Delta \widetilde H| = \big| |H| - |G|\big|$ is true because one of the two sets $H$ and $\widetilde H$ is contained into the other one, since they are the intersection of an half-space with two balls with the same center. Putting then together the claim of Lemma~\ref{trilem} and the quantitative isoperimetric inequality~(\ref{ShQII}), keeping in mind that $B_F$ is a Fraenkel ball, we get
\[
C_F(N) \sqrt{\delta(E)} \geq \lambda(E) = |E \Delta B_F|
\geq \Big|\big(E \Delta B_F\big) \cap \{x_1>0\}\Big|
= |G \Delta H|
\geq \frac {|G|}{D'} \big| \bar(G) - \bar (\widetilde H)\big|\,,
\]
where $D'$ is the diameter of $G\cup \widetilde H$. Keep in mind that the diameter of $E$ is less than $D|E|^{1/N}=D$, thus any point of $G$ has distance at most $D$ from the origin. On the other hand, the distance of $F$ from the origin is at most $D + \omega_N^{-\frac 1N}\leq 2D$, because otherwise the Fraenkel ball (which has radius $\omega_N^{-\frac 1N}$) has empty intersection with $E$, and this is impossible. Finally, the radius of $\widehat B$ is less than $4D$, because otherwise $\widetilde H$ contains a whole ball of radius $D$, so its volume is more than $\omega_N D^N\geq 1$, and this is impossible because this volume equals the volume of $E^+$, that is between $1/4$ and $3/4$. Summarizing, the diameter $D'$ of $G\cup \widetilde H$ is less than $7D$. One can observe that we have been very rough in doing this estimate, but a more careful estimate would be in any case of the form $D' \leq c D$ with $c\geq 1$, so the dependence on $D$ would be with the same exponent and only the multiplicative constant would be smaller than $7$. The above estimate gives then
\begin{equation}\label{atlo}
C_F(N) \sqrt{\delta(E)} \geq 7\, \frac{|G|}D \,\big| \bar(G) - \bar (\widetilde H)\big|\,.
\end{equation}
Let us now consider the distance between the barycenters of $G$ and of $\widetilde H$. Since $G=E^+$, by definition we have
\[
\bar(G) = \intmed_{E^+} x\,dx = \frac 1{|E^+|} \int_{E^+} x\, dx =  \frac P{|E^+|}\,.
\]
Therefore, the projection of $\bar(G)$ onto $\Pi_1$ coincides with $\widehat P / |E^+|$, since $\widehat P$ is the projection of $P$ onto $\Pi_1$. Concerning $\widetilde H$, instead, keep in mind that $\widehat B$ is a ball centered in $F$, and $\widetilde H= \widehat B \cap \{x_1>0\}$. As a consequence, the projection of $\bar (\widetilde H)$ onto $\Pi_1$ is the same as the projection of $F$ onto $\Pi_1$, which is the point $\widehat F$. Since the distance between $\bar(G)$ and $\bar(\widetilde H)$ is larger than the distance between their projections on $\Pi_1$, and keeping in mind~(\ref{awlog}), we get
\[
\big| \bar(G) - \bar (\widetilde H)\big| \geq \bigg| \frac{\widehat P}{|E^+|} - \widehat F\bigg|\geq  \bigg| \frac{\widehat P}{|E^+|}\bigg|
=  \frac{|\widehat P|}{|E^+|} = \frac \eta{|E^+|}\,.
\]
Finally, inserting this estimate into~(\ref{atlo}), and keepind in mind again that $G=E^+$, we get
\[
C_F(N) \sqrt{\delta(E)} \geq 7\, \frac \eta D\,.
\]
Let now $\widetilde E$ be the set which maximizes $\lambda_0$ among $E'$ and $E''$, so $\widetilde E= E'$ if $\lambda_0(E')\geq \lambda_0(E'')$, and otherwise $\widetilde E=E''$. Notice that by construction $\widetilde E$ is a $k$-symmetric set, and since $|\widetilde E|\geq 1/2$, then its diameter is less than $2D \leq 3D |\widetilde E|^{1/N}$. Therefore, putting this last estimate into~(\ref{usend}) and applying~(\ref{inductive}) on $\widetilde E$, we get
\begin{equation}\label{abouttoend}\begin{split}
\lambda_0(E) &\leq \lambda_0(E') + \lambda_0(E'') + \frac{8\omega_{N-1}}{\omega_N^{\frac{N-1}N}}\, \eta
\leq 2\lambda_0(\widetilde E) + \frac{8\omega_{N-1}}{7\omega_N^{\frac{N-1}N}}\, D C_F(N) \sqrt{\delta(E)}\\
&\leq 2 C_k(N,3D) \sqrt{\delta(\widetilde E)} + \frac{8\omega_{N-1}}{7\omega_N^{\frac{N-1}N}}\, D C_F(N) \sqrt{\delta(E)} \,.
\end{split}\end{equation}
To conclude, we need now to estimate $\delta(\widetilde E)$ in terms of $\delta(E)$. Since $B_\eta'$ and $B_\eta''$ are two balls with the same volumes as $E'$ and $E''$, and since these volumes are both larger than $1/2$, we have
\[\begin{split}
\delta(E') +\delta(E'') &= \frac{P(E') - P(B_\eta')}{P(B_\eta')}+\frac{P(E'') - P(B_\eta'')}{P(B_\eta'')}
= \frac{P(E') - P(B_\eta')}{N\omega_N^{1/N} |E'|^{\frac{N-1}N}}+\frac{P(E'') - P(B_\eta'')}{N\omega_N^{1/N} |E''|^{\frac{N-1}N}}\\
&\leq 2^{-\frac{N-1}N} \, \frac{P(E')+P(E'')-(P(B_\eta')+P(B_\eta''))}{N\omega_N^{1/N}}\,.
\end{split}\]
Using again that $1-2\eps\geq 1/2$, we get
\[\begin{split}
P(B_\eta')+P(B_\eta'') &= N\omega_N^{1/N} \Big(|B'|^{\frac{N-1}N}+|B''|^{\frac{N-1}N}\Big)
= N\omega_N^{1/N} \Big(\big(1 + 2\eps\big)^{\frac{N-1}N}+\big(1- 2\eps\big)^{\frac{N-1}N}\Big)\\
&\geq N\omega_N^{1/N} \bigg( 2 - 2^{3+\frac 1N} \frac {N-1}{N^2}\,\eps^2\bigg)
=2P(B) - 2^{3+\frac 1N}N\omega_N^{1/N}\, \frac {N-1}{N^2}   \,\eps^2\,.
\end{split}\]
Therefore, since $P(E')+P(E'') \leq 2P(E)$, we can continue the above estimate as
\[\begin{split}
\delta(E') +\delta(E'')&\leq 2^{-\frac{N-1}N} \, \frac{2P(E)-2P(B) +2^{3+\frac 1N}N\omega_N^{1/N}\, \frac {N-1}{N^2}   \,\eps^2 }{N\omega_N^{1/N}}\\
&=2^{-\frac{N-1}N} \bigg( 2\delta(E)+2^{3+\frac 1N}\, \frac {N-1}{N^2}   \,\eps^2\bigg)
= 2^{1/N} \delta(E) + 2^{2+\frac 2N}\, \frac{N-1}{N^2}\,\eps^2\\
&\leq \Big(2^{1/N}  + 2^{2+\frac 2N}\, \frac{N-1}{N^2}\,C_1^2 D^2 \Big)\delta(E)
\leq C_2^2 D^2  \delta(E)
\end{split}\]
where we have also used Lemma~\ref{lemmaepssqrtdelta}, the fact that $D\geq 2 \omega_N^{1/N}$ by the isodiametric inequality, and where we have set
\begin{equation}\label{defC2}
C_2 = \sqrt{2^{\frac 1N-2} \omega_N^{-\frac 2N} + 2^{2+\frac 2N}\, \frac{N-1}{N^2}\,C_1^2 }\,.
\end{equation}
Notice that $C_2$ is a purely geometric constant only depending on $N$, since so is $C_1$.\par
We are now about to conclude. Indeed, since $\widetilde E$ is one between $E'$ and $E''$, the last estimate implies
\[
\delta(\widetilde E)\leq C_2^2 D^2 \delta(E)\,,
\]
which inserted in~(\ref{abouttoend}) gives
\[
\lambda_0(E) \leq \bigg(2 C_k(N,3D) C_2  + \frac{8\omega_{N-1}}{7\omega_N^{\frac{N-1}N}}\,  C_F(N) \bigg)\, D \sqrt{\delta(E)} \,.
\]
Keep in mind that this estimate holds under the assumption that $\eps\leq \lambda_0(E)/8\leq 1/4$, while otherwise we have $\lambda_0(E) \leq 8C_1 D\sqrt{\delta(E)}$. Therefore, we have shown that~(\ref{inductive}) is valid for any $(k-1)$-symmetric set $E$, as soon as we define
\begin{equation}\label{quick}
C_{k-1}(N,D) \geq  8C_1  D \vee \bigg(2 C_k(N,3D) C_2  + \frac{8\omega_{N-1}}{7\omega_N^{\frac{N-1}N}}\,  C_F(N) \bigg)\, D\,.
\end{equation}
We can then finally set
\begin{equation}\label{defC3}
C_3 = \max \bigg\{ \frac{8C_1}{2^N C_F(N)} ,\, 2 C_2 + \frac{2^{3-N} \omega_{N-1}}{7\omega_N^{\frac{N-1}N}},\, \frac 1{2\omega_N^{1/N}}\bigg\}\,.
\end{equation}
Also $C_3$ is a purely geometric constant only depending on $N$, since so are $C_1,\, C_2$ and $C_F(N)$. A quick check, recalling that by assumption $C_k\geq 2^N C_F(N)$, ensures that the inequality~(\ref{quick}) is valid setting $C_{k-1}(N,D)= C_3 C_k(N,3D) D$, according to~(\ref{defCk-1}). Thus, we have proved the validity of~(\ref{inductive}) for any $(k-1)$-symmetric set defining $C_{k-1}(N,D)$ according to~(\ref{defCk-1}).\par

To conclude, we only have to check the validity of the inequality $C_{k-1}(N,D)\geq 2^N C_F(N)$. But, since we know by assumption that $C_k(N,3D)\geq 2^N C_F(N)$, this is surely true if $C_3 D\geq 1$, which in turn is true since $D\geq 2\omega_N^{1/N}$ by the isodiametric inequality and by~(\ref{defC3}).
\end{proof}

\begin{remark}\label{dependance}
As discussed in the Introduction, the constant of Theorem~\mref{main} must depend on the diameter $D$, and actually explodes when $D\to \infty$. Our construction provides an estimate with $C(N,D) \lesssim D^N$, see~(\ref{defCND}). An interesting question might be to find the sharp power of $C(N,D)$ with respect to $D$. This power cannot be lower than $\frac{N-1}N$. In fact, if the set $E$ is made by a ball of volume $1-\eps$ centered in the origin, plus a second ball of volume $\eps$ at distance $3/\eps$, then $\lambda_0(E)=2$, and $\delta(E)\approx \eps^{\frac{N-1}N}\approx D^{-\frac{N-1}N}$, where $D\approx 3/\eps$ is the diameter of $E$. Therefore, the sharp value of $C(N,D)$ must surely be at least of order $D^{\frac{N-1}N}$.
\end{remark}

\section{\label{Sect:comparison}Comparison with the other results}

In this final section, we briefly comment on the relation between our Theorem~\mref{main} and the two preceding results mentioned in the Introduction, namely, Theorem~\ref{ThBCH} by Bianchini, Croce and Henrot, and the Quantitative Inequality~(\ref{ShBar}) by Fuglede. Keep in mind that in the first paper the inequality~(\ref{ourineq}), with a purely geometric constant $C_{BCH}$ in place of $C(N,D)$, was proved for every connected set $E\subseteq \R^2$, while the inequality by Fuglede was proved with a purely dimensional constant $C_B(N)$ in place of $C(N,D)$ and for all convex sets in $\R^N$.

It is easy to see (and we are going to do it in a moment) that our result implies both the above-mentioned results. However, it is important to remind that the proof by Fuglede was particularly hard, since he could not use, as we did, the sharp quantitative inequality~(\ref{ShQII}), which is a very strong result; on the other hand, the proof of Theorem~\ref{ThBCH}, which also uses~(\ref{ShQII}), is particularly short and clear.\par

Concerning Theorem~\ref{ThBCH}, take a connected set $E$ of unit volume in $\R^2$ (keep in mind that all the inequalities are scaling-invariant, so fixing the volume is harmless). Then, there are two possibilities: either the diameter of $E$ is larger than $2\sqrt\pi$, or not. In the first case, the perimeter of $E$ is at least $4\sqrt\pi$, hence the isoperimetric deficit satisfies
\[
\delta(E) = \frac{P(E) - P(B(1))}{P(B(1))} = \frac{P(E) - 2\sqrt\pi}{2\sqrt\pi} \geq 1\,,
\]
and since the barycentric asymmetry always satisfies $\lambda_0(E)\leq 2$, we have
\[
\lambda_0(E) \leq 2 \leq 2\sqrt{\delta(E)}\,.
\]
In the second case, by Theorem~\mref{main} we have that
\[
\lambda_0(E) \leq C(2,2\sqrt\pi) \sqrt{\delta(E)}\,.
\]
Therefore, we recover Theorem~\ref{ThBCH} with the purely geometric constant $\max \{ 2, C(2,2\sqrt\pi)\}$.

Concerning Fuglede's Theorem, the situation is very similar. Indeed, take this time a convex set $E$ of unit volume in $\R^2$. Suppose first that the diameter of $E$ is larger than $N\D$, for a constant $\D$ to be precised in a moment, and in particular that the points $(0,0,0,\dots 0)$ and $(N\D,0,0,\dots 0)$ belong to $\partial E$. Therefore, by convexity we have that for every $0\leq t \leq N\D$ the section $E_t=\{ y\in\R^{N-1},\, (t,y)\in E\}$ satisfies $\H^{N-1}(E_t) \leq 1/\D$. But then,
\[\begin{split}
P(E) &\geq \int_{t=0}^{N\D} (N-1)\omega_{N-1}^{\frac 1{N-1}}\big(\H^{N-1}(E_t)\big)^{\frac{N-2}{N-1}}
\geq \D^{\frac 1{N-1}}\int_0^{N\D} (N-1)\omega_{N-1}^{\frac 1{N-1}}\H^{N-1}(E_t)\\
&= \D^{\frac 1{N-1}}(N-1)\omega_{N-1}^{\frac 1{N-1}}\,.
\end{split}\]
Since the perimeter of the unit ball is $N\omega_N^{1/N}$, the above estimate gives $\delta(E)\geq 1$ as soon as
\[
\D^{\frac 1{N-1}}(N-1)\omega_{N-1}^{\frac 1{N-1}} \geq 2N\omega_N^{1/N}\,,
\]
which is true with the choice
\[
\D := \bigg(\frac{2N}{N-1}\bigg)^{N-1} \,\omega_N^{\frac{N-1} N}\omega_{N-1}^{-1} \,.
\]
Exactly as before, since $\lambda_0(E)\leq 2$, we surely have $\lambda_0(E) \leq 2 \sqrt{\delta(E)}$. And exactly as before, applying our Theorem~\mref{main} with $D=N\D$ if the diameter of $E$ is not larger than $N\D$, we recover Fuglede's inequality with the purely dimensional constant $\max \big\{ 2, C(N, N\D)\big\}$.\par

\end{document}